\newtheorem{theorem}{Theorem}
\newenvironment{proof}{\prepf\rm}{\endprepf}
\newcommand{\qed}{\hfill$\Box$}
\begin{document}
\title{Undirecting membership in models of ZFA}
\author{Bea Adam-Day%
  \thanks{University of Leeds, email: \url{B.Adam-Day@leeds.ac.uk} \textsc{orcid}: \url{https://orcid.org/0000-0002-7891-916X}} \and Peter J. Cameron%
\thanks{University of St Andrews, email: \url{pjc20@st-andrews.ac.uk} \textsc{orcid:} \url{https://orcid.org/0000-0003-3130-9505}}}
\date{}
\maketitle
\begin{abstract}
It is known that, if we take a countable model of Zermelo--Fraenkel set theory
ZFC and ``undirect'' the membership relation (that is, make a graph by joining
$x$ to $y$ if either $x\in y$ or $y\in x$), we obtain the Erd\H{o}s--R\'enyi
random graph. The crucial axiom in the proof of this is the Axiom of
Foundation; so it is natural to wonder what happens if we delete this axiom,
or replace it by an alternative (such as Aczel's Anti-Foundation Axiom).
The resulting graph may fail to be simple; it may have loops (if $x\in x$ for
some $x$) or multiple edges (if $x\in y$ and $y\in x$ for some $x,y$). We
show that, in ZFA, if we keep the loops and ignore the multiple edges, we
obtain the ``random loopy graph'' (which is $\aleph_0$-categorical and
homogeneous), but if we keep multiple edges, the resulting graph is not
$\aleph_0$-categorical, but has infinitely many $1$-types. Moreover, if we
keep only loops and double edges and discard single edges, the resulting
graph contains countably many connected components isomorphic to any given
finite connected graph with loops.
\end{abstract}
\section{Introduction}

According to the downward L\"owenheim--Skolem
theorem~\cite[Corollary 3.1.4]{hodges}, if a
first-order theory in a countable language is consistent, then it has a
countable model. In particular, Zermelo--Fraenkel set theory ZFC, if
consistent, has a countable model. (This is the source of the \emph{Skolem
paradox}, since the existence of uncountable sets is a theorem of ZFC.) Indeed
there are many different countable models, but they all have a common feature.
To describe this, we briefly introduce the Erd\H{o}s--R\'enyi random graph
(sometimes referred to as Rado's graph, for reasons we will see).

Erd\H{o}s and R\'enyi~\cite{er} showed that there is a countable graph $R$ such
that, if a random graph $X$ on a fixed countable vertex set is chosen by
selecting edges independently with probability $1/2$ (or, indeed, any fixed
$p$ with $0<p<1$), then $X$ is isomorphic to $R$ almost surely (that is, with
probability $1$). Moreover, $R$ is highly symmetric; they showed that such a
graph has infinitely many automorphisms, but in fact it is \emph{homogeneous}:
any isomorphism between finite induced subgraphs extends to an automorphism.
Erd\H{o}s and R\'enyi gave a non-constructive existence proof, based on the
following property, called the \emph{Alice's Restaurant property}, or ARP:
\begin{quote}
Given any two disjoint finite sets $U$ and $V$ of vertices, there is a vertex 
$z$ joined to every vertex in $U$ and no vertex in $V$.
\end{quote}
In other terminology, $R$ is the \emph{Fra\"{\i}ss\'e limit} of the class of
finite graphs. For further discussion of the graph $R$, we refer to
\cite{c:rg}; for Fra\"{\i}ss\'e's theorem, see \cite[Theorem 6.1.2]{hodges}.

A model of Zermelo--Fraenkel set theory ZF consists of a collection of objects
called sets, and a binary relation $\in$ on this collection. In other words,
it is a directed graph. By ``undirecting'' this relation $\in$, that is,
defining an undirected graph in which $x$ and $y$ are adjacent if either
$x\in y$ or $y\in x$, we obtain a simple graph.  (At the end of this section,
we state the Axiom of Foundation, one of the axioms of ZF, and show that it
forbids directed cycles for the membership relation; in particular it forbids
loops ($x\in x$) and double edges $(x\in y\in x$) in the undirected graph.) We
call this the \emph{membership graph} of the model.

\begin{theorem}
The membership graph of a countable model of ZFC is isomorphic to the
Erd\H{o}s--R\'enyi random graph $R$.
\label{t1}
\end{theorem}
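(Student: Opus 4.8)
The plan is to verify that the membership graph $G$ of a countable model $M$ of ZFC satisfies the Alice's Restaurant property (ARP). Since the vertex set of $G$ is the domain of $M$, it is countable, and since (by the Axiom of Foundation, as noted above) $G$ has no loops or double edges, $G$ is a countable simple graph. The characterisation of $R$ by ARP then forces $G\cong R$, so all the real work is in checking ARP.

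To check ARP, suppose we are given disjoint finite sets of vertices $U=\{u_1,\dots,u_m\}$ and $V=\{v_1,\dots,v_n\}$, and we must produce a single set $z$ in $M$ that is a neighbour of every $u_i$ and a non-neighbour of every $v_j$. The natural idea is to put the whole of $U$ inside $z$, so that each $u_i\in z$ and hence each $u_i$ is joined to $z$; concretely, I would take $z=\{u_1,\dots,u_m,w\}$ for a suitable auxiliary set $w$, which exists in $M$ by Pairing and Union. This immediately secures the $U$-side of the condition.

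The delicate half is arranging that $z$ is adjacent to no $v_j$, that is, $v_j\notin z$ and $z\notin v_j$ for every $j$. The first is easy: the elements of $z$ are exactly $u_1,\dots,u_m,w$, and since $U\cap V=\emptyset$ it suffices to pick $w$ distinct from every $v_j$. The second requirement, $z\notin v_j$, is where Foundation does the real work: it supplies a rank function with $x\in y\Rightarrow\mathrm{rank}(x)<\mathrm{rank}(y)$, together with sets of arbitrarily large rank. Choosing $w$ of rank strictly greater than each $\mathrm{rank}(v_j)$ (for instance an ordinal above all of them, which is then automatically distinct from every $v_j$) forces $\mathrm{rank}(z)>\mathrm{rank}(v_j)$, so $z$ cannot be a member of any $v_j$. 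Hence $z$ is joined to every vertex of $U$ and to no vertex of $V$, and ARP holds.

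I expect the main obstacle to be precisely this last point, ruling out $z\in v_j$: without Foundation there is no rank bound to exploit, and the freshly built set could be ``absorbed'' as a member of a forbidden vertex. This is the step where the hypothesis is genuinely used, and, as the abstract flags, the one that fails once Foundation is dropped. A minor check to fold in is that $z$ is a genuinely new vertex, as ARP intends: $z=u_i$ would give $u_i\in u_i$, contradicting Foundation, while $z=v_j$ is excluded by the rank inequality, so $z\notin U\cup V$.
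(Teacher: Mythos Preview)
Your argument is correct: the witness $z=\{u_1,\dots,u_m,w\}$ with $w$ of rank exceeding every $\mathrm{rank}(v_j)$ does the job, and your case analysis (including the check that $z\notin U\cup V$) is clean.

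The paper's proof, however, takes a slightly different and more economical route. Rather than invoking the rank function, it sets $z=U\cup\{V\}$, using $V$ itself as the auxiliary element. The non-adjacency with each $v_j$ is then argued by a direct cycle contradiction: $v_j\in z$ forces $v_j=V$ and hence $v_j\in v_j$, while $z\in v_j$ yields the $3$-cycle $v_j\in V\in z\in v_j$; both violate Foundation in its raw ``no $\in$-cycles'' form. What this buys is axiomatic parsimony: the paper's argument uses only Empty Set, Pairing, Union and Foundation, whereas your rank-based approach implicitly leans on the machinery needed to define the cumulative hierarchy and the rank function (Power Set and Replacement, in the usual development). Your method is perhaps the more natural one for a reader steeped in the $V_\alpha$ picture, and it makes transparent \emph{why} Foundation is doing the work; the paper's trick of packaging $V$ into $z$ is slicker and shows that the result already holds over a very weak fragment of ZF.
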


\begin{proof}
We verify the ARP. Let $U$ and $V$ be finite disjoint sets of vertices. Take
$z=U\cup\{V\}$. (The existence of $z$ is guaranteed by the Pairing and Union
axioms.) If $u\in U$, then $u\in z$, so $z$ is joined to $u$.
Suppose, for a contradiction, that $z$ is joined to a vertex $v\in V$. There
are two cases:
\begin{itemize}\itemsep0pt
\item $v\in z$. Since $v\notin U$, we must have $v=V$, so $v\in v$, 
contradicting Foundation.
\item $z\in v$. Then $v\in V\in z\in v$, also contradicting Foundation.\qed
\end{itemize}
\end{proof}

\paragraph{Remark} Observe that, in the proof, we use only the Empty Set axiom
(asserting that sets exist), the Pairing axiom, and the Axiom of Foundation.
The other axioms (Infinity, Selection, Choice, and so on) are not required.

In particular, there is a standard model of ZFC with the negation of the Axiom
of Infinity, or \emph{hereditarily finite set theory} HF. We take the sets to
be the natural numbers, and represent a finite set $\{a_1,\ldots,a_n\}$ of
natural numbers by $b=2^{a_1}+\cdots+2^{a_n}$, so that, for $a<b$, we have $a$
joined to $b$ if and only if the $a$th digit in the base~$2$ expansion of $b$
is $1$. The model of $R$ given by undirecting this membership relation is
precisely the graph constructed by Rado~\cite{rado}.

\paragraph{The Axiom of Foundation}

We state the Axiom of Foundation, and its role in forbidding directed cycles
for the membership relation in models of ZF; see  \cite{c:slc,devlin} for more
details.

The Axiom of Foundation states:
\begin{quote}
For every non-empty set $x$, there exists $y\in x$ such that
$x\cap y=\emptyset$.
\end{quote}
Suppose that there were a directed cycle
$x_0\in x_1\in\cdots\in x_{n-1}\in x_0$. Let $x=\{x_0,x_1,\ldots,x_{n-1}\}$.
For any $y\in x$, say $y=x_i$, we have $x_{i-1}\in x\cap y$, contradicting
the Axiom of Foundation.

In fact the axiom also forbids infinite descending chains for the membership
relation, and is ``equivalent'' to this (but not by a first-order implication
since there is no first-order formula forbidding such chains).

\section{The Anti-Foundation Axiom}

Since the Axiom of Foundation is required for the proof of Theorem~\ref{t1},
what happens if we delete it, or replace it by an alternative? We consider this
question when Foundation is replaced by Aczel's Anti-Foundation Axiom. 
Following Barwise and Moss~\cite{bm}, we use this axiom in the form of the
Solution Lemma \cite[p.72]{bm}, which we briefly discuss.

Let $X$ be a set of ``indeterminates'', and $A$ a set of sets called ``atoms''.
A \emph{flat system of equations} is a set of equations of the form $x=S_x$,
where $S_x$ is a subset of $X\cup A$ for each $x\in X$. A \emph{solution}
to the system is an assignment of sets to the indeterminates so that the
equations become true.

For example, if $A=\{a,b\}$, then
\begin{eqnarray*}
x &=& \{y,a\}, \\
y &=& \{x,b\}
\end{eqnarray*}
is a flat system of equations.

The \emph{anti-foundation axiom}, or AFA, asserts that any flat system of
equations has a \emph{unique} solution.

Note that the solution to the above system will satisfy $x\in y$ and $y\in x$,
so will correspond to a double edge in the membership graph. Similarly, the
solution to $x=\{x\}$ will give a loop in the graph.

The axioms system ZFA denotes ZFC with the Axiom of Foundation deleted and
replaced by the axiom AFA. Our concern is with membership graphs of models of
ZFA. We note in passing that, if ZFC is consistent then so is ZFA: see
\cite[Chapter~9]{bm}.

Note that Barwise and Moss work in a set theory containing ``urelements''
which are not themselves sets; this makes no difference to our arguments.

\section{Membership graphs of models of ZFA}

The argument showing that the membership graph of a model of ZFC is a
simple graph does depend on Foundation, as we saw. In ZFA, we have sets
$x$ with $x\in x$, giving loops in the graph; and pairs $x,y$ of sets with
$x\in y\in x$, giving double edges.

The \emph{random loopy graph} is obtained with probability $1$ if we choose
a graph on a countable vertex set by choosing edges (including loop edges)
from pairs of not necessarily distinct vertices independently with probability
$1/2$. It is homogeneous, and is the Fra\"{\i}ss\'e limit of the class of
finite loopy graphs. The relevant version of the Alice's Restaurant property
characterises it as the countable graph such that, for any two finite disjoint
sets $U$ and $V$ of vertices, there are vertices $z_1$ and $z_2$, where $z_1$
is loopless and $z_2$ has a loop, each joined to all vertices in $U$ and to
none in $V$. The proof is very similar to the usual proof for the random graph,
and we will not give it here.

\begin{theorem}
The membership graph of a countable model of ZFA, ignoring multiple edges but
keeping loops, is isomorphic to the random loopy graph.
\label{t2}
\end{theorem}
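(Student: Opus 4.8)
The plan is to verify the version of the Alice's Restaurant property stated just above: given finite disjoint sets $U$ and $V$ of vertices, I must produce a loopless vertex $z_1$ and a looped vertex $z_2$, each adjacent to every element of $U$ and to no element of $V$. Recall that in this graph $x$ is adjacent to $y$ precisely when $x\in y$ or $y\in x$, and $x$ carries a loop precisely when $x\in x$; thus ``$z$ adjacent to all of $U$'' is guaranteed as soon as $U\subseteq z$, while ``$z$ not adjacent to $v$'' means both $v\notin z$ and $z\notin v$. Collecting the second conditions over $v\in V$, non-adjacency to $V$ amounts to the requirement that no $v\in V$ is an element of $z$ and that $z\notin\bigcup V$.

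For the loopless witness I would not use the set $U\cup\{V\}$ from the proof of Theorem~\ref{t1}, since without Foundation it may acquire a loop or an unwanted edge to $V$; instead I would take $z_1=U\cup\{\alpha\}$ for a suitably chosen ordinal $\alpha$. Adjacency to $U$ is immediate. Since $\alpha$ is well-founded it cannot equal $z_1$ (which would force $\alpha\in\alpha$), and for all but finitely many $\alpha$ we also have $z_1\notin U$, so $z_1\notin z_1$ is loopless. The looped witness, by contrast, is exactly where AFA enters: I would let $z_2$ be the unique solution, guaranteed by AFA, of the flat equation $x=\{x\}\cup U\cup\{\alpha\}$, again with $\alpha$ an ordinal to be chosen. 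By construction $z_2\in z_2$ and $U\subseteq z_2$, so $z_2$ has a loop and is adjacent to all of $U$; its elements are $z_2$ itself, the elements of $U$, and $\alpha$, so a generic choice of $\alpha$ (outside the finitely many ordinals lying in $V$, and avoiding the finitely many values making $z_2\in U$ or $z_2=v$) secures $v\notin z_2$ for each $v\in V$.

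The main obstacle in both constructions is the remaining condition $z\notin\bigcup V$ — that the witness is not an element of any $v\in V$ — because $V$ is adversarial and some $v$ could a priori be arranged to contain the obvious candidates. I would resolve this by a boundedness argument. As $\alpha$ ranges over the ordinals, the candidate sets obtained for distinct $\alpha$ are pairwise distinct once $\alpha\notin U$; for the looped witnesses $w_\alpha=\{w_\alpha\}\cup U\cup\{\alpha\}$ this is because $\alpha\in w_\alpha$, whereas $\alpha\notin w_\beta$ for $\beta\neq\alpha$, using that the non-well-founded solution $w_\beta$ can neither equal nor contain the well-founded ordinal $\alpha$. Hence $\alpha\mapsto w_\alpha$ is injective on a proper class of ordinals, so the class of $\alpha$ with $w_\alpha\in\bigcup V$ is, through this injection, the preimage of a subset of the \emph{set} $\bigcup V$, and is therefore itself a set, hence bounded in the ordinals.

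Choosing $\alpha$ beyond all these bounds — and likewise beyond the finitely many ordinals excluded for the other conditions and for the analogous (even easier) count governing $z_1\in\bigcup V$ — yields witnesses $z_1$ and $z_2$ with the required loop status and adjacencies. This establishes the loopy Alice's Restaurant property, and since that property characterises the random loopy graph as the Fra\"{\i}ss\'e limit of finite loopy graphs, the membership graph (ignoring multiple edges but keeping loops) of a countable model of ZFA is isomorphic to it.
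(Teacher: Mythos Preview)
Your argument is correct: adjoining a sufficiently large ordinal $\alpha$ to $U$ (and, for $z_2$, solving $z=\{z\}\cup U\cup\{\alpha\}$ via AFA) gives witnesses once $\alpha$ avoids a set of bad values, and your injectivity-plus-Replacement argument shows the bad class of $\alpha$'s really is a set of ordinals and hence bounded. One phrase is slightly loose: ``the non-well-founded solution $w_\beta$ can neither equal nor contain the well-founded ordinal $\alpha$'' --- well-foundedness alone gives $w_\beta\neq\alpha$, but $\alpha\notin w_\beta$ also needs the standing hypotheses $\alpha\notin U$ and $\alpha\neq\beta$; since you have imposed these, the case analysis goes through.

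The paper's proof reaches the same endpoint by a different bookkeeping device. Instead of ranging over ordinals and invoking Replacement, it fixes once and for all an element $x$ outside the explicit ``forbidden set'' $V\cup\bigl(\bigcup U\bigr)\cup\bigl(\bigcup V\bigr)\cup\bigl(\bigcup\bigcup V\bigr)$, whose existence follows from the Russell/Selection observation that no set contains all sets of a given finite cardinality; it then takes $z_1=\{x,u_1,\ldots,u_m\}$ and $z_2=\{z_2,x,u_1,\ldots,u_m\}$ and checks the conditions directly. To rule out $x=z_1$ or $x=z_2$ it uses a cardinality trick, choosing $|x|=m+3$. Your approach replaces both the explicit forbidden set and the cardinality trick by the single observation that ordinals are well-founded and form a proper class; the paper's approach is more elementary in its axiomatic footprint (Selection, Pairing, Union, AFA, but not Replacement), while yours is perhaps conceptually cleaner in that a single ``choose $\alpha$ large'' absorbs all the side conditions at once. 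Incidentally, your boundedness step admits a shortcut in the spirit of the paper: if $w_\alpha\in\bigcup V$ then $\alpha\in w_\alpha\in\bigcup V$, so $\alpha\in\bigcup\bigcup V$, bounding the bad $\alpha$'s directly without the injectivity argument.
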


\begin{proof}
We begin with some preliminaries. In a model of a subset
of ZF including at least Selection, there is no set whose members are all
sets. For, if such a set $S$ exists, then Selection would give Bertrand
Russell's set $R=\{x\in S:x\notin x\}$, whose existence leads to a contraction
on examining whether $R\in R$ or not.

It follows that, if the Union axiom also holds, there is no set $S'$ which
contains all the $p$-element sets, for a fixed positive natural number
$p$: for the union of $S'$ would be $S$. In particular, if $T$ is any set,
then there is a set of cardinality $p$ which is not a member of $T$.

Now let $\Gamma$ be the membership graph of a countable model of ZFA, with
loops but no double edges. We will show that $\Gamma$ satisfies the loopy
version of ARP. For this let $\{u_1,\ldots,u_m\}$ and $\{v_1,\ldots,v_n\}$ be
disjoint sets, and let
$z_1 = \{x, u_1, \ldots, u_m \}$ and
$z_2 = \{z_2, x, u_1, \ldots , u_m\}$,
where $x$ is a vertex satisfying the following conditions:
\begin{itemize}\itemsep0pt
\item $x$ is not equal to any of the $v_j$;
\item $x$ is not contained in any of the $v_j$ or the $u_i$;
\item $x$ is not contained in any of the sets contained in any of the $v_j$.
\end{itemize}
Such an $x$ exists, since otherwise the union of $V$, $\bigcup U$, 
$\bigcup V$, and $\bigcup\bigcup V$ would contain every set, a contradiction.

Furthermore, we may assume that $|x|=m+3$, since by our earlier remarks there
is a set of this cardinality not a member of the ``forbidden set''
$U\cup V\cup(\bigcup V)\cup(\bigcup\bigcup V)$ above.

We remark that the existence of $z_1$ follows simply from Pairing and Union;
for $z_2$, we invoke Anti-Foundation, letting $z_2$ be the unique solution of
the equation
\[z = \{z, x, u_1, \ldots , u_m\}.\]

Both $z_1$ and $z_2$ are joined to all the vertices $u_i$; and by construction,
there is a loop on $z_2$. We claim that there is no loop on $z_1$. For such
a loop would imply one of the following:
\begin{itemize}\itemsep0pt
\item $z_1=u_i$ for some $i$. Then we have $x\in u_i$, contradicting our
choice of $x$.
\item $z_1=x$. But we chose $x$ with $|x|=m+3$, whereas $|z_1|\le m+1$. (Note
that in the same way we see that $z_2\ne x$.)
\end{itemize}

Finally we have to show that $z_1$ and $z_2$ are not joined to any $v_j$.
We cannot have any $v_j$ contained in $z_1$ or $z_2$; for the $v_j$
are distinct from the $u_i$ by hypothesis, not equal to $x$ by choice of $x$,
and not equal to $z_2$ since if so then $x$ would be a member of $v_j$, again
contrary to our choice of $x$. Also we cannot have $z_1$
or $z_2\in v_j$, since if so then $x$ belongs to a member of $v_j$, again
contrary to our choice of $x$.\qed
\end{proof}

\paragraph{Remark} As in the case of ZFC, it is interesting to note which
axioms are actually used in the proof. The Empty Set, Pairing and Union axioms
are once again used; of course, the Anti-Foundation Axiom is used; and as well,
we use the Selection Axiom.

\medskip

What happens if we keep the multiple edges? We cannot describe all graphs that
can arise, but we note that there is no such graph which has the nice
properties of $\aleph_0$-categoricity and homogeneity which hold in the random
and random  loopy graphs. This will follow from the theorem of Engeler,
Ryll-Nardzewski and Svenonius (see~\cite[Theorem 6.3.1]{hodges}), according to
which a countable first-order structure is $\aleph_0$-categorical (that is,
determined uniquely by its first-order theory and the property of countability)
if and only if its theory has only finitely many $n$-types for all $n$.

\begin{theorem}
The membership graph of a countable model of ZFA, keeping double edges, is not
$\aleph_0$-categorical.
\label{t3}
\end{theorem}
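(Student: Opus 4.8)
The plan is to invoke the Engeler--Ryll-Nardzewski--Svenonius theorem quoted above in contrapositive form: it suffices to exhibit infinitely many distinct $1$-types in the theory, since the criterion requires finitely many $n$-types for every $n$, in particular for $n=1$. I regard the membership graph with double edges recorded as a first-order structure that includes, besides ordinary adjacency, a symmetric binary relation $D$ recording which pairs carry a double edge, so that $D(x,y)$ holds precisely when $x\in y$ and $y\in x$ in the underlying model. The key observation is that, for each $k\ge 1$, the property ``$x$ is incident to at least $k$ double edges'' is expressible in the graph by the first-order formula $\varphi_k(x):=\exists y_1\cdots y_k\bigl(\bigwedge_i D(x,y_i)\wedge\bigwedge_{i<j}y_i\ne y_j\bigr)$.

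First I would construct, inside the given countable model, for each $k$ a vertex $z_k$ incident to \emph{exactly} $k$ double edges. Using the Solution Lemma, I solve the flat system
\[
z=\{y_1,\ldots,y_k\},\qquad y_i=\{z,c_i\}\ \ (1\le i\le k),
\]
where $c_1,\ldots,c_k$ are distinct well-founded sets (for instance natural numbers). Then $y_i\in z$ from the first equation and $z\in y_i$ from the $i$th equation, so each pair $(z,y_i)$ carries a double edge, producing $k$ candidate double neighbours of $z$.

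The step requiring care---the one I expect to be the main obstacle---is verifying that $z_k$ really has exactly $k$ double neighbours. A double neighbour of $z$ must be a member $w$ of $z$ with $z\in w$; since the members of $z$ are exactly $y_1,\ldots,y_k$, the number of double neighbours cannot exceed $k$, and it equals $k$ provided the $y_i$ are pairwise distinct. To secure distinctness (and to exclude accidental coincidences such as $z=y_i$ or $z=c_i$) I would use that the $c_i$ are well-founded whereas $z$ and each $y_i$ are not, so that a short membership analysis forces $z,y_1,\ldots,y_k$ to be distinct from the $c_i$ and from one another. This is the same device---blocking unwanted incidences by an invariant that the relevant sets cannot share---that was applied via cardinalities in the proof of Theorem~\ref{t2}.

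Granting that $z_k$ has exactly $k$ double neighbours, it satisfies $\varphi_k$ but fails $\varphi_{k+1}$, so the vertices $z_1,z_2,\ldots$, all lying in the single fixed countable model, realize pairwise distinct $1$-types. Hence the theory of the membership graph has infinitely many $1$-types, and by the Engeler--Ryll-Nardzewski--Svenonius theorem this graph is not $\aleph_0$-categorical.
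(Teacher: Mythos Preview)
Your proposal is correct and follows essentially the same route as the paper: both solve the flat system $z=\{y_1,\ldots,y_k\}$, $y_i=\{z,c_i\}$ with the $c_i$ distinct well-founded sets, observe that the double neighbours of $z$ are exactly its members, and then invoke Engeler--Ryll-Nardzewski--Svenonius via the infinitely many $1$-types witnessed by ``exactly $k$ double edges''. The paper dispatches the distinctness of the $y_i$ in one phrase (``by extension''), while you spell out the well-founded/ill-founded dichotomy; both are fine.
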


\begin{proof}
Take a countable model of ZFA.
Let $a_n$ be distinct well-founded sets for $n\in\mathbb{N}$, for example, the
natural numbers. For every natural number $n$, consider the equations
\begin{eqnarray*}
y &=& \{x_0,\ldots,x_{n-1}\},\\
x_i &=& \{y,a_i\} \hbox{ for }i=0,\ldots,n-1.
\end{eqnarray*}
By AFA, these equations have a unique solution in the model.
We have $x_i\in y$ and $y\in x_i$, so all the edges $\{x_i,y\}$
are double. (These sets are all distinct, by extension.) There are no further
double edges on $y$, since if $\{y,z\}$ is a double edge then $z\in y$ and
so $z=x_i$ for some $i$.

Thus, for every natural number $n$, there is a set lying on exactly $n$ double
edges. So there are infinitely many $1$-types in the graph, and it cannot be
$\aleph_0$-categorical, by the theorem of Engeler, Ryll-Nardzewski and
Svenonius.

Moreover, we can take the infinite set of equations
\begin{eqnarray*}
y &=& \{x_n:n\in\mathbb{N}\}, \\
x_n &=& \{y,a_n\} \hbox{ for }n\in\mathbb{N}.
\end{eqnarray*}
A solution to these equations will be a point lying on infinitely many double
edges. \qed
\end{proof}

Another natural reduct is obtained by keeping only the double edges.
The \emph{double-edge graph} of a model of ZFA has as vertices the sets and
as edges all pairs $\{x,y\}$ with $x\in y$ and $y\in x$ (allowing $x=y$).
Thus, it includes loops and double edges but omits all ``conventional''
instances of the membership relation (where $a\in b$ but $b\notin a$).

\begin{theorem}
Let $D$ be the double-edge graph of a countable model of ZFA. Then, for any
finite connected loopy graph $\Gamma$, $D$ has infinitely many connected
components isomorphic to $\Gamma$. It also has at least one infinite component.
\label{t4}
\end{theorem}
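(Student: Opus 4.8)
The plan is to realise each desired component as the unique solution of a flat system of equations, via the Anti-Foundation Axiom exactly as in the proofs of Theorems~\ref{t2} and~\ref{t3}, built around one key observation: a well-founded set lies on no double edge and carries no loop, since a double edge $x\in y\in x$ or a loop $x\in x$ would place $x$ on a membership cycle, which is impossible for well-founded sets. Thus well-founded sets are \emph{invisible} in $D$: each is an isolated vertex, and an unreciprocated membership $a\in b$ contributes no edge of $D$ at all. This is what lets me use well-founded ``markers'' to break symmetry (forcing the generating sets to be distinct) without ever creating a spurious double edge or enlarging a component.

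Fix a finite connected loopy graph $\Gamma$ on vertices $1,\ldots,k$, writing $i\sim j$ when $i$ and $j$ are adjacent, with $i\sim i$ meaning a loop at $i$. Suppose first that $\Gamma$ has at least one edge or loop. I would choose distinct well-founded markers $a_1,\ldots,a_k$ (say natural numbers) and let AFA supply the unique solution of
\[ X_i \;=\; \{a_i\}\cup\{X_j : j\sim i\}\qquad(i=1,\ldots,k). \]
The first checks are that the $X_i$ are non-well-founded and distinct: since $\Gamma$ is connected and has an edge or loop, every vertex has a neighbour (possibly itself), so each $X_i$ sits on a membership cycle; and if $X_i=X_{i'}$ then $a_i\in X_{i'}$ would force $a_i=a_{i'}$ or $a_i=X_j$, both impossible. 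Then I would verify that the double edges among the $X_i$ are exactly those of $\Gamma$: indeed $X_j\in X_i$ holds iff $X_j=a_i$ (excluded, the marker being well-founded) or $X_j=X_\ell$ with $\ell\sim i$, i.e. iff $j\sim i$; likewise $X_i\in X_i$ iff $i\sim i$. Hence $i\mapsto X_i$ is an isomorphism from $\Gamma$ onto the double-edge graph induced on $\{X_1,\ldots,X_k\}$.

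The decisive step is to show this set is a full connected component of $D$, that is, that no double edge leaves it. If $W$ is double-edge joined to $X_i$ then $W\in X_i$, so $W=a_i$ or $W=X_\ell$; the former is ruled out because $X_i\notin a_i$ (a well-founded set has no non-well-founded member), and the latter keeps $W$ inside the set. In particular the markers, though members of the $X_i$, are $D$-adjacent to nothing here. As $\Gamma$ is connected, the component is a single copy of it. To get infinitely many copies I would run the construction with pairwise disjoint marker-tuples $a_1^{(t)},\ldots,a_k^{(t)}$ for $t\in\mathbb{N}$; the distinct markers show sets from different copies are unequal, and the same analysis of members shows no double edge runs between copies, so these are genuinely distinct components. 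The degenerate case, $\Gamma$ a single vertex with no loop, is immediate: every well-founded set is an isolated vertex of $D$ and there are infinitely many.

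For the infinite component I would reuse the infinite system from Theorem~\ref{t3}: $y=\{x_n:n\in\mathbb{N}\}$ and $x_n=\{y,a_n\}$ with the $a_n$ distinct well-founded markers. Each $x_n$ is double-edge joined to $y$, no two $x_n$ are joined (since $x_m\in x_n$ would force $x_m=a_n$), and the member analysis again shows no double edge escapes $\{y\}\cup\{x_n:n\in\mathbb{N}\}$; this infinite star is the required infinite component. I expect the main obstacle to be precisely the verification that the construction produces \emph{no} unintended double edges, both among the generating sets and, above all, to the outside world, so that the component is exactly the finite graph written down; every other point is routine bookkeeping, and the whole difficulty is concentrated in the single fact that well-founded markers are invisible in $D$.
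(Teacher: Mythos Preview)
Your approach is essentially the same as the paper's: both use AFA to solve the system $X_i=\{a_i\}\cup\{X_j:j\sim i\}$ with distinct well-founded markers, then argue that any double edge incident with $X_i$ must go to some member of $X_i$, and the marker option is killed because a well-founded set cannot contain a non-well-founded one. The paper presents this via a worked example (a $4$-cycle with a loop) and simply asserts that infinitely many marker choices yield infinitely many components; you give the general argument and are more scrupulous about verifying that the $X_i$ are pairwise distinct, that copies built from disjoint marker-tuples are genuinely different components, and that the edgeless one-vertex graph needs separate (trivial) treatment. For the infinite component the paper merely cites the infinite-valency vertex from Theorem~\ref{t3}, whereas you go further and check that the resulting star is itself an entire component; this is a harmless strengthening.
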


\begin{proof}
An example will illustrate the general proof. Let $\Gamma$ be the $4$-cycle
with edges $\{v_0,v_1\}$, $\{v_1,v_2\}$, $\{v_2,v_3\}$ and $\{v_3,v_0\}$,
together with a loop at $v_0$. Take any four well-founded sets $a_0,a_1,
a_2,a_3$ (for example, the first four natural numbers), and consider the
equations
\begin{eqnarray*}
y_0 &=& \{a_0,y_0,y_1,y_3\},\\
y_1 &=& \{a_1,y_0,y_2\},\\
y_2 &=& \{a_2,y_1,y_3\},\\
y_3 &=& \{a_3,y_0,y_2\}.
\end{eqnarray*}
The unique solution gives an induced subgraph isomorphic to $\Gamma$. Note
that there are no other double edges meeting these vertices: if, say,
$\{y_1,x\}$ were a double edge, then $x\in y_1$, and so by Extensionality,
$x=a_1$ or $x=y_0$ or $x=y_2$; the first is impossible since $y_1\notin a_1$
by assumption. So the given set is a connected component.

Since there are infinitely many possible choices of $a_0,\ldots,a_3$, there
are infinitely many such connected components.

We saw earlier that there is a vertex with infinite valency; it lies in an
infinite component of $D$. \qed
\end{proof}

This leaves a few questions which we have not been able to answer in this current work.
\begin{enumerate}\itemsep0pt
\item Is it true that the first-order theory of the membership graph of a
countable model of ZFA has infinitely many countable models?
\item Is it true that there are infinitely many non-isomorphic graphs which are
membership graphs of countable models of ZFA?
\item Can more be said about infinite connected components of the double-edge
graph?
\item What about models of ZFA where the Axiom of Infinity is replaced with
its negation?
\item\label{last} Is it true that, if two countable multigraphs are
elementarily equivalent, and one is the membership graph of a model of ZFA,
then so is the other?
\end{enumerate}
However, in a sequel by the first author, John Howe and Rosario Mennuni~\cite{adhm} the first two questions are answered affirmatively and a characterisation of the connected components of double-edge graphs is given, thus answering the third question. The analogue to Question~\ref{last} for double-edge graphs is shown to be negative, and indeed it is shown that, for any double-edge graph, there is an elementarily equivalent countable structure which is not itself a double-edge graph.

\end{document}